\newcounter{remark}
\newcommand{\beq}{\begin{equation}}
\newcommand{\enq}{\end{equation}}
\newtheorem{lemma}{Lemma}
\newtheorem{proposition}{Proposition}
\newtheorem{corollary}{Corollary}
\normalsize \pagestyle{plain}
\title{The infinite associahedron and R.J. Thompson's group $T$}
\author{Ariadna Fossas Tenas\\
Institut Fourier\\
Universit\'e de Grenoble I\\
38402 St. Martin d'H\`eres. France\\
ariadna.fossas@ujf-grenoble.fr}
\date{}
\begin{document}

\maketitle

\begin{abstract}
In this paper we construct a cellular complex which is an infinite 
analogue to Stasheff's associahedra. We prove that it is contractible and
state that its (combinatorial) automorphism group is isomorphic to
a semi-direct product of R.J. Thompson's group $T$ with $\mathbb{Z}/2\mathbb{Z}$.
\end{abstract}

\textbf{MSC classification:} 20F65, 20F38, 57Q05, 57M07.

\textbf{Keywords:} Stasheff's associahedra, cellular complexes, 
automorphism groups, R.J. Thompson's groups, contractible spaces.

\section{Introduction}

Although Stasheff's associahedra were first described combinatorially in 1951
by Dov Tamari \cite{tamari} in his thesis as realizations of his poset lattice of 
bracketings of a word of length $n$, they are named after
Jim Stasheff's construction \cite{stasheff,stasheff2} as crucial ingredients to
his homotopy theoretic characterization of based loop spaces. Associahedra were 
proved to be polytopes by John Milnor (unpublished) and have been realized as 
convex polytopes many times \cite{loday,CFZ,D,T,GKZ}. The vertices of the
associahedron are in bijection with all ways to put brackets in an expression 
of $n$ non-associative variables (avoiding the bracket containing all the expression
and brackets containing a single variable), but also with all rooted binary trees 
with $n+1$ leaves, and all the minimal triangulations of a convex polygon with $n+2$
sides. The name \textit{associahedra} comes from the bracketing viewpoint, where
edges are obtained by replacing a sub-word $t(su)$ by $(ts)u$
(associativity relation). The associahedron 
can also be constructed as the dual of the arc complex of a polygon 
\cite{associahedron}.

Stasheff associahedra play a role in different domains of mathematics such as
combinatorics, homotopy theory, cluster algebras and topology (see \cite{asso}).
In this paper we construct an infinite dimensional cellular complex $C$ that can 
be seen as a generalisation of the associahedron for an infinitely sided convex 
polygon. To give an idea, the vertices of $C$ are in bijection with all possible 
triangulations of the circle which differ from a given one only in finitely many 
diagonals. We also prove that $C$ is contractible. Finally, we study the group of
combinatorial automorphisms of $C$ and we prove that it is isomorphic to the 
semi-direct product of Thompson's group $T$ with $\mathbb{Z}/2\mathbb{Z}$.

A relation between associahedra and Thompson's group $F$ was first established 
by Greenberg \cite{greenberg}. Thomspon's group $F$, which is the smallest of
the three classical Thompson groups $F,T,V$ (first introduced by McKenzie and
Thompson \cite{McKTh}, see \cite{CFP} for an introduction),  
is usually seen as the group
of all piecewise-linear order-preserving self-homeomorphisms of $[0,1]$ with 
only finitely many breakpoints, each of which has dyadic rational coefficients,
and where every slope is an integral power of 2. Thompson's group $T$ is the analogue
of $F$ when considering self-homeomorphisms of the circle $S^1$ seen as the unit
interval with identified endpoints. In this case one must include the condition 
of self-homeomorphisms preserving set-wise the dyadic rational numbers (which for
$F$ is a consequence of the other conditions).

The two dimensional skeleton of $C$ was introduced by Funar, Kapoudjian and 
Sergiescu \cite{universalmcg,braidedThompson,combable,FKS} as a 2-dimensional
complex where vertices are isotopy classes of decompositions of an infinite 
type surface, edges are elementary moves and faces can be seen as relations between
this elementary moves. This is inspired from their version of Thomspon's group $T$ 
as an asymptotically rigid mapping class group of a connected, non compact surface 
of genre 0 with infinitely many ends.
To give a hint on what an asymptotically rigid mapping class group is,
one can think about it as homotopy classes of homeomorphisms which
preserve a given tessellation of the surface outside a compact subsurface.

Finally, it is worth to mention a few other cellular complexes where $T$ was proved 
to act nicely. Brown \cite{brown1}, Brown and Geoghegan \cite{brownG}, 
and Stein \cite{stein} use the action of $T$ in complexes of
basis of Jonsson-Tarski algebras \cite{jonssonTarski} to obtain, 
respectively, finiteness properties and homological properties. Farley
uses diagram groups to construct CAT(0) cubical complexes where
Thompson's groups act \cite{farleyCAT0,farleyboundary,farleyhilbert,farleyends}.
Greenberg \cite{greenberg2} and Martin \cite{martin} constructed contractible
complexes where $T$ acts. Unfortunately, none of the automorphism groups of these
complexes is known. 

\subsection*{Acknowledgements.}

The author wishes to thank Ross Geoghegan, Louis Funar, Jim Stasheff,
Vlad Sergiescu and Collin Bleak for useful comments and discussions. 
This work was partially supported by ``Fundaci\'on Caja Madrid'' 
Postgraduated Fellowship and the ANR 2011 BS 01 020 01 ModGroup.

\section{Stasheff's associahedra}

We adapt Greenberg's construction of Stasheff's associahedra
\cite{greenberg} to the language of minimal tessellations of a convex polygon. 
The original idea of Greenberg's construction in terms 
of planar trees is due to Boardman and Vogt \cite{boardman}.  

Let $P_n$ ($n \geq 3$) be a convex polygon with $n$ vertices, $v_1, \ldots, v_n$, 
where the vertices $v_i$ and $v_j$ are adjacent if and only if $|i-j| \equiv 1$ 
modulo $n$. Let $D_n$ be the set of interior diagonals of $P_n$, i.e. 
$$D_n = \left\{(v_i,v_j) \in V_n^2 \, : \, |i-j| > 1 (mod \, n)\right\}.$$
Let $\mathcal{T}(P_n)$ be the maximal subset of $\mathcal{P}(D_n)$ containing only
subsets of $D_n$ without crossing diagonals. The empty set belongs to 
$\mathcal{T}(P_n)$ and is denoted $\emptyset_n$. 
The set $\mathcal{T}(P_n)$ can be seen as the set of
\textit{minimal} tessellations of $P_n$ (minimal in the sense that there are
no interior vertices). 

We can define a partial order in $\mathcal{T}(P_n)$ by saying that $\alpha < \beta$ 
if $\beta \subset \alpha$. Greenberg's method consists to associate 
a closed cell $f_{\alpha}$ to every $\alpha \in \mathcal{T}(P_n) - \emptyset_n$.
The dimension of $f_{\alpha}$ is $n-3-|\alpha|$.
Furthermore, if $\alpha < \beta$, then $f_{\alpha}$ is included into $f_{\beta}$.
Stasheff's associahedron $\mathcal{A}(P_n)$ is the union of all these cells,
preserving the inclusions.

We use induction over $n$ to define Stasheff's associahedra. The first two cases to
consider are:
\begin{enumerate}
	\item The triangular case. Note that $\mathcal{T}(P_3)=\{\emptyset_3\}$.
	Thus, $\mathcal{A}(P_3)$ is a point 
	(the 0-cell associated to the triangle itself as a tessellation).
	\item The square case. Note that, given a square with vertices 1,2,3,4 as before,
	it admits only two interior diagonals: $(1,3)$ and $(2,4)$.
	Furthermore, the two diagonals cross each other.
	Thus, $\mathcal{T}(P_4)= \{\{(1,3)\},\{(2,4)\},\emptyset_4\}$. 
	The faces associated to $\{(1,3)\}$ and $\{(2,4)\}$ are the two endpoints of a 
	closed segment, which is itself the 1-cell associated to $\emptyset_4$, and
	coincides with $\mathcal{A}(P_4)$.
\end{enumerate}

Take $n > 4$. Suppose that:
\begin{enumerate} 
	\item The cells $f_{\alpha}$ are defined for all $\alpha \in \mathcal{T}(P_j)$ 
	and for all $j$ satisfying $3 \leq j \leq n-1$.
	\item The inclusions $i_{\beta\alpha} : f_{\alpha} \rightarrow f_{\beta}$ are 
	defined for all $\alpha,\beta \in \mathcal{T}(P_j)$ such that $\alpha < \beta$,
	and all $j$ satisfying $3 \leq j \leq n-1$.
	\item The cellular complex $(\mathcal{A}(P_j),\partial \mathcal{A}(P_j))$ is
	topologically equivalent to $(B^{j-3},S^{j-4})$ for all $j$ satisfying 
	$3 \leq j \leq n-1$, where $B^k$, $S^k$ respectively are the closed ball and 
	the sphere of dimension $k$.
	\item For all $j$ satisfying $3 \leq j \leq n-1$ and for all 
	$\alpha \in \mathcal{T}(P_j)$, the cell $f_{\alpha}$ is isomorphic to 
	$$\prod_{i=1}^{|\alpha|+1}\mathcal{A}(P_{n_i}),$$
	where $p_1, p_2, \ldots, p_{|\alpha|}, p_{|\alpha|+1}$ 
	(respectively $n_1,\ldots, n_{|\alpha|+1}$) be the polygons obtained
	by cutting $P_n$ along the diagonals of $\alpha$  
	(respectively their number of sides).
\end{enumerate}

Let $\alpha \in \mathcal{T}(P_n) - \emptyset_n$. Let 
$p_1, p_2, \ldots, p_{|\alpha|}, p_{|\alpha|+1}$ 
(respectively $n_1,\ldots, n_{|\alpha|+1}$) be the polygons obtained from
$P_n$ by cutting along the diagonals of $\alpha$  
(respectively their number of sides) as before. 
Define $f_{\alpha}$ as the product space
$$f_{\alpha} \equiv \prod_{i=1}^{|\alpha|+1}\mathcal{A}(P_{n_i}).$$

Let $\alpha, \beta \in \mathcal{T}(P_n) - \emptyset_n$ such that $\alpha < \beta$.
To define the inclusion $i_{\beta\alpha}: f_{\alpha} \rightarrow f_{\beta}$,
first suppose that $ 1 \leq |\beta| = |\alpha| - 1$. Then, there exist a unique
diagonal $d$ which belongs to $\alpha$ and does not belong to $\beta$. Enumerate
the polygons obtained by cutting $P_n$ along the diagonals of $\alpha$ starting
by the two polygons containing the diagonal $d$ on the border. By construction,
$$f_{\alpha} \equiv \mathcal{A}(P_{n_1}) \times \mathcal{A}(P_{n_2})
\times \prod_{i=3}^{|\alpha|+1}\mathcal{A}(P_{n_i}),$$
where the third element of the product is non-empty 
(note that $|\alpha| \geq 2$). Furthermore, 
$$f_{\beta} \equiv \mathcal{A}(P_{n_1+n_2-2})
\times \prod_{i=3}^{|\alpha|+1}\mathcal{A}(P_{n_i}),$$
where $p_{n_1+n_2-2}$ is obtained by glueing $p_{n_1}$ and $p_{n_2}$ along $d$.
Consider $\gamma = \{(1,n_1)\} \in \mathcal{A}(P_{n_1+n_2-2})$. By induction 
hypothesis $f_{\gamma} \equiv \mathcal{A}(P_{n_1}) \times \mathcal{A}(P_{n_2})$
and the inclusion $\tau : f_{\gamma} \rightarrow \mathcal{A}(P_{n_1+n_2-2})$ is defined.
Hence, $i_{\beta\alpha}: f_{\alpha} \rightarrow f_{\beta}$ can be defined using
$\tau$ and taking the identity over the factor 
$\prod_{i=3}^{|\alpha|+1}\mathcal{A}(P_{n_i})$.

When $|\alpha| > |\beta| + 1$, it suffices to consider $\alpha=\alpha_0 < \alpha_1 <
\ldots < \alpha_k=\beta$ such that $|\alpha_{i+1}|=|\alpha_i| + 1$. The composition
of the applications $f_{\alpha_{i+1}\alpha_i}$ gives $f_{\beta\alpha}$.

Finally, define $\partial \mathcal{A}(P_n)$ as
$$\left(\bigsqcup_{\alpha \in \mathcal{T}(P_n)-\emptyset_n}f_\alpha\right)/\sim,$$
where $f_{\alpha} \sim i_{\beta\alpha}(f_{\alpha})$ for all $\alpha < \beta$.
Stasheff \cite{stasheff} proved that $\partial \mathcal{A}(P_n)$ is homeomorphic 
to the sphere $S^{n-4}$ of dimension $n-4$.
Thus, $\mathcal{A}(P_n)$ is defined by filling the interior of 
$\partial \mathcal{A}(P_n)$ by a ball of dimension $n-3$.

\medskip

\textbf{Example:} $\mathcal{A}(P_5)$. There are five different triangulations
of a pentagon, and there are also five tessellations cutting the pentagon into 
one quadrilateral and one triangle, each of them being contained in two different
triangulations. Hence, the associahedron of a pentagon is itself a pentagon.

\begin{figure}[H]
	\begin{center}
		\input{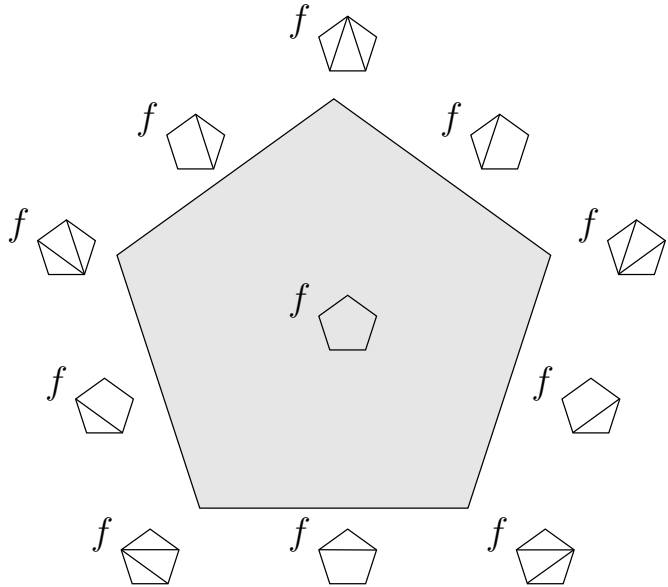}
		\caption{The associahedron $\mathcal{A}(P_5)$.}
	\end{center}
	\label{pentaedre}
\end{figure}

\medskip

\textbf{Remark:} one can pass from our version to Greenberg's by taking the 
dual graph of each tessellation, rooted with respect to a marked side of the 
polygon $P_n$. In Greenberg's construction, the poset indexing the faces of the
$n$-th associahedron is the set of rooted trees with $n$ leaves, and a tree $t_1$
is smaller than a tree $t_2$ if $t_1$ can be obtained from $t_2$ by a sequence of
collapsing edges. Note that our $\mathcal{A}(P_n)$ corresponds to Greenberg's 
$A_{n-1}$ (the dual graph of a tessellation of an $n$-sided polygon has $n$ vertices
with valence 1, i.e. the root and $n-1$ leaves).

\section{The infinite associahedron}

As in the finite case, the infinite associahedron is constructed as the union of
closed cells, each one indexed by an element of a given partially ordered set, modulo
inclusions following the order relation. The elements of the partially ordered set
can be seen geometrically as all possible tessellations of the circle which differ 
from a given triangulation only in finitely many diagonals. It is worth to 
mention that, although the objects are described geometrically, only their 
combinatorial properties are used.

\subsection{Construction}

Let $D$ be the open disk in $\mathbb{R}^2$ of center $(0,0)$ and perimeter 1.
The boundary of $D$ is denoted $\partial D$. 
Let $\gamma: [0,1] \rightarrow \partial D$
be the arc-parametrization of $\partial D$ with $\gamma(0)=(\frac{1}{2\pi},0)$.
Let $\mathcal{A}$ be the set of geodesic segments with (different) extremal points in
$\gamma\left(\mathbb{Z}[1/2] \cap [0,1]\right)$, where 
$$\mathbb{Z}[1/2] \cap [0,1] = \left\{\frac{m}{2^n} \in \mathbb{R} \, : \, 
m \in \mathbb{N}\cup\{0\}, n \in \mathbb{N}, m \leq 2^n\right\}.$$
The elements of $\mathcal{A}$ are
called \textit{dyadic arcs} and, for $x,y \in \mathbb{Z}[1/2] \cap [0,1]$, the pair
$\left(x,y\right)$ denotes the dyadic arc with extremal points $\gamma(x)$ and 
$\gamma(y)$.

Consider the following subset of $\mathcal{A}$:
$$A_F = \left\{\left(0,\frac{1}{2}\right)\right\} \bigcup 
\left\{\left(\frac{m}{2^n},\frac{m+1}{2^n}\right) \, : \,
	m,n \in \mathbb{Z}, m \in \{0, \ldots, 2^n-1\}, n > 1 \right\}.$$
Note that $A_F$ defines a triangulation\footnote{The pictures are in hyperbolic 
geometry for clarity. Furthermore, Thurston noticed that Thompson's group $T$ can 
be seen as the group of homeomorphisms of the real projective line which are piecewise
$PSL(2,\mathbb{Z})$ \cite{imbert,PSLandT}.} of $D$, meaning
that $D - A_F$ is a disjoint union of open triangles. Furthermore,
the triangulation is minimal in the sense that, for every dyadic arc $a$ of 
$A_F$, the set $A_F - \{a\}$ no longer defines a triangulation
of $D$. 

\begin{figure}[H]
	\begin{center}
		\input{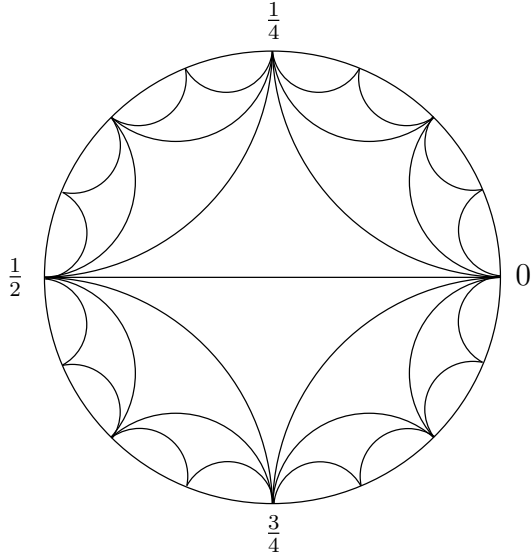}
		\caption{Triangulation of $D$ defined by $A_F$.}
	\end{center}
	\label{farey}
\end{figure}

A subset $A$ of $\mathcal{A}$ is an \textit{$F$-triangulation} if the 
following conditions are satisfied:
\begin{enumerate}
	\item $D - A$ is a disjoint union of open triangles,
	\item for every dyadic arc $a$ of $A$, the set $A - \{a\}$ 
	no longer defines a triangulation of $D$, and
	\item the subsets $A$ and $A_F$ differ only on finitely many dyadic
	arcs, meaning that their symmetric difference is a finite set.
\end{enumerate}
In particular, any two different dyadic arcs $a_1,a_2$ of $A$ do not
cross each other in $D$ (but they can have an endpoint in common).

A subset $B$ of $\mathcal{A}$ is an \textit{$F$-tessellation} if there exists an
$F$-triangulation $A$ and $a_1,\ldots,a_k$ dyadic arcs of $A$ such
that $B= A - \{a_1,\ldots,a_k\}$. The number $k$ is the \textit{rank}
of $B$ and it is well-defined because all the $F$-tessellations are minimal. 
In particular, an $F$-triangulation is an $F$-tessellation of rank 0.
Note that the $F$-triangulation $A$ and the dyadic $a_1,\ldots,a_k$ defining
$B$ are not unique, but there are finitely many possibilities. 

Let $\mathcal{I}$ be the set of $F$-tessellations of $D$. 
Let $A \in \mathcal{I}$ of rank $k$. By definition, $D - A$ is a 
disjoint union of infinitely many triangles and finitely many non-triangles. 
Let $n_1, \ldots, n_m$ be the number of sides of the non-triangular polygons of 
$D - A$. Then, we associate to $A$ the following $k$-cell:
$$f_A \equiv \prod_{i=1}^m \mathcal{A}(P_{n_i}).$$ 
If $k=0$ the product is empty and we associate to $A$ a point.
Note that one could also take as a definition the infinite product of all 
associahedra since only finitely may are different from a point.

As in the case of Stasheff's associahedra, we can define a partial order
on $\mathcal{I}$: if $A,B \in \mathcal{I}$ are such that $B \subset A$,
we say that $A < B$. Thus, if $A < B$, then there is an injective map 
$\iota_{BA}: f_A \longrightarrow f_B$ defined as in the case of Stasheff's 
associahedra. To do so, consider the smallest polygon $P_m$ inscribed in 
$A \cap B$ containing all non-triangular polygons of $D - B$. Let $\alpha$ 
(respectively $\beta$) be the tessellation of $P_m$ obtained by the restriction 
of $A$ (respectively $B$) on $P_m$. Furthermore, $\alpha < \beta$.
Then, $f_\alpha \equiv f_A$, $f_\beta \equiv f_B$ and there is an inclusion 
$i_{\beta\alpha}: f_\alpha \rightarrow f_\beta$ on $\mathcal{A}(P_m)$ 
defining $\iota_{BA}$ by composition with the two previous isomorphisms. 
Note that, the vertices of an $F$-tessellation being indexed by
all dyadic rational numbers of the unit interval, the injections
$\iota_{BA}$ are well determined by the inclusions $B \subset A$.
Note also that the boundary $\partial f_B$
of a $k$-cell is isomorphic to the union of all $j$-cells $f_A$ such that
$A < B$ (hence $j < k$). 

The \textit{infinite associahedra} is the cellular complex
$$C = \left(\bigsqcup_{A \in \mathcal{I}}f_A\right)/\sim,$$
where $\sim$ is the equivalence relation generated by 
$\{x \sim f_{BA}(x) : (A,B) \in \mathcal{I}^2, x \in A, A < B\}$.
Remark that $C$ is a \textit{regular} CW complex in the sense that
the attaching maps are homeomorphisms. 

\subsection{Low dimensional cells}

It can be useful to describe explicitly all kinds of cells up to dimension 3.
The set of vertices of $C$ is the set of $F$-triangulations, and two 
$F$-triangulations $A_1,A_2$ are joined by an edge in $C^1$ if and only if 
their intersection $A_1 \cap A_2$ is an $F$-tessellation of rank 1.
Equivalently, there exists a
unique dyadic arc $a_1\in A_1$ such that $a_1 \not \in A_2$, and there exists
a unique dyadic arc $a_2 \in A_2$ satisfying $a_2 \not \in A_1$. Furthermore,
$a_1,a_2$ are the two possible diagonals of the single non-triangular component 
of $D - (A_1 \cap A_2)$ (this component is a square).

\begin{figure}[H]
	\begin{center}
		\scalebox{0.5}{\input{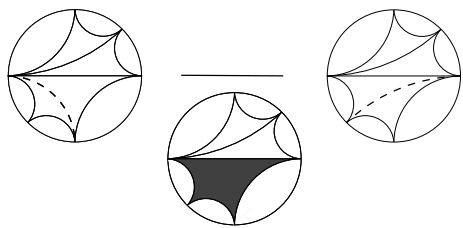}}
		\caption{Edge of $C^1$ with its associated $F$-tessellations of rank 0
		and 1.}
	\end{center}
	\label{edge}
\end{figure}

 The 2-skeleton $C^2$ is constructed from $C^1$ by attaching 2-cells as follows:
let $B$ be an $F$-tessellation of rank 2. Consider the non-triangular components
of $D - B$. One of the following situations is satisfied:
\begin{enumerate}
	\item There are exactly two non-triangular components and both of them are
	squares. Let $a_1,a_2$ be the two dyadic arcs which are interior diagonals of 
	the first squared component, and $a_3,a_4$ the diagonals of the second square.
	Define $A_1= B \cup \{a_1,a_3\}$, $A_2=B \cup \{a_1,a_4\}$, 
	$A_3=B \cup\{a_2,a_3\}$ and $A_4=B \cup \{a_2,a_4\}$. For $i \in \{1,2,3,4\}$,
	$A_i$ is an $F$-triangulation. Furthermore, $\{A_1,A_2,A_3,A_4\}$ is the set
	of all $F$-triangulations containing $B$. The sub-graph of $C^1$ induced by the
	set of vertices $\{A_1,A_2,A_3,A_4\}$ is a closed path of length 4. Thus, it
	can be seen as the boundary of a square. One glues a 2-cell along this closed
	path such that the attaching map is an homeomorphism.
	\begin{figure}[H]
		\begin{center}
			\input{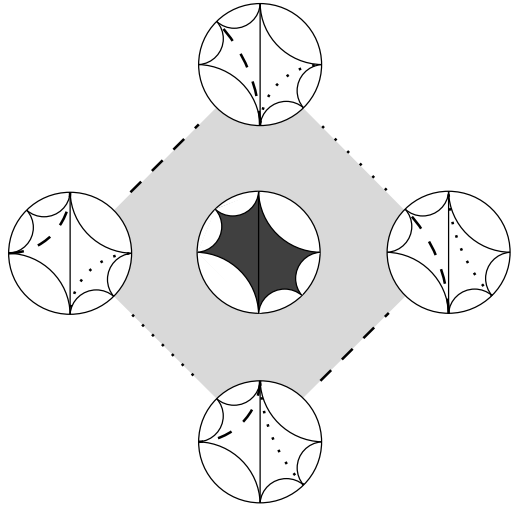}
			\caption{Squared 2-cell with its associated $F$-tessellations of rank 0
			and 2.}
		\end{center}
		\label{square}
	\end{figure}
	\item There is a unique non-triangular component and it is a pentagon. Let 
	$\gamma(v_1)$, $\gamma(v_2)$, $\gamma(v_3)$, $\gamma(v_4)$, $\gamma(v_5)$ 
	be its vertices, where $0 \leq v_1 < v_2 < v_3 < v_4 < v_5 < 1$. Define 
	$A_1=B \cup \{(v_1,v_3),(v_1,v_4)\}$, $A_2=B \cup \{(v_1,v_3),(v_3,v_5)\}$,
	$A_3=B \cup \{(v_1,v_4),(v_2,v_4)\}$, $A_4=B \cup \{(v_2,v_4),(v_2,v_5)\}$ and
	$A_5=B \cup \{(v_2,v_5),(v_3,v_5)\}$. For $i \in \{1,2,3,4,5\}$,
	$A_i$ is an $F$-triangulation. Furthermore, $\{A_1,A_2,A_3,A_4,A_5\}$ is the set
	of all $F$-triangulations containing $B$. The sub-graph of $C^1$ induced by the
	set of vertices $\{A_1,A_2,A_3,A_4,A_5,A_1\}$ is a closed path of length 5. 
	Thus, it
	can be seen as the boundary of a pentagon. One glues a 2-cell along this closed
	path in a way that the attaching map is a homeomorphism.
	\begin{figure}[H]
		\begin{center}
			\input{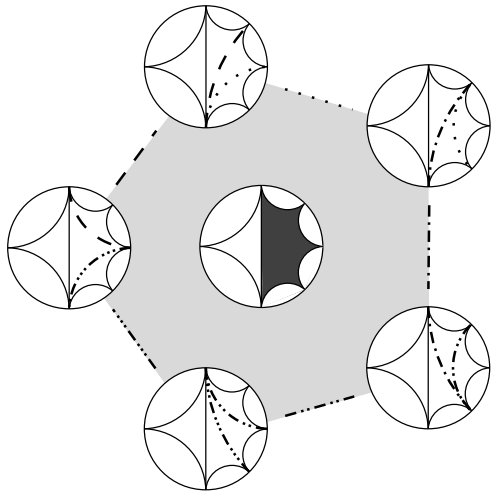}
			\caption{Pentagonal 2-cell with its associated $F$-tessellation of rank 0
			and 2.}
		\end{center}
		\label{pentagon}
	\end{figure} 
\end{enumerate}

Let $B$ be an $F$-tessellation of rank 3. One of the following
is satisfied:	
\begin{itemize}
	\item There are exactly 3 non-triangular components of $D - B$, all
	of them being squares. In this case, the set of all $F$-triangulations 
	containing $B$ has 8 elements $\{A_1,\ldots,A_8\}$ (each one obtained by adding
	to $B$ one diagonal of each squared component of $D - B$). 
	The sub-complex of $C^2$ induced by the set of vertices $\{A_1,\ldots,A_8\}$
	is the boundary of a cube, so it is topologically a sphere of dimension 2. 
	One glues a 3-cell inside this cube by \emph{choosing} the attaching map to be  
	a homeomorphism. 	
	\begin{figure}[H]
		\begin{center}
			\input{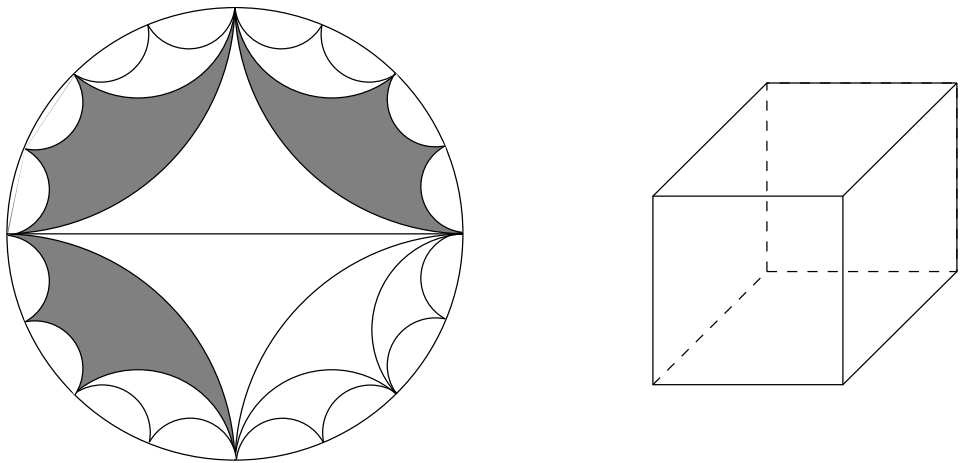}
			\caption{Cubical 3-cell with its associated $F$-tessellation of rank $3$.}
		\end{center}
		\label{cube}
	\end{figure}
	\item There are exactly 2 non-triangular components, one of them being a square
	and the other being a pentagon. To get an $F$-triangulation from $B$, one
	chooses independently a triangulation of the pentagonal component and a 
	triangulation of the square. Thus, the set of all $F$-triangulations containing
	$B$ has 10 elements and the sub-complex of $C^2$ they induce is the boundary of the
	polyhedron obtained by the cross product of a pentagon and an interval, which
	is topologically a sphere of dimension 2. One glues a 3-cell inside this polyhedron 
	by choosing the attaching map to be a homeomorphism. 	
	\begin{figure}[H]
		\begin{center}
			\input{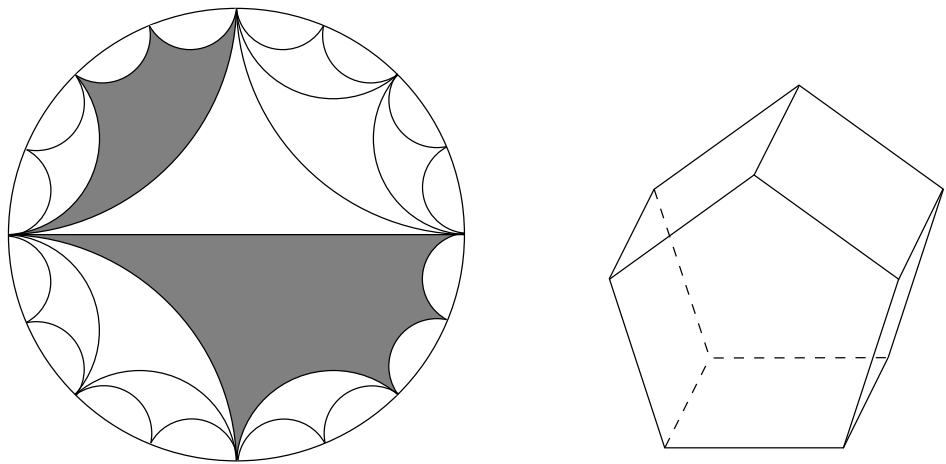}
			\caption{Prism 3-cell with its associated $F$-tessellation of rank $3$.}
		\end{center}
		\label{pentacube}
	\end{figure}
	\item There is a unique non-triangular component, and it is an hexagon.
	Then, the sub-complex of $C^2$ induced by the set of $F$-triangulations 
	containing $B$ is isomorphic to the boundary of Stasheff's associahedron 
	of an hexagon, which is topologically a sphere of dimension 2. One glues a 3-cell
	inside by choosing the attaching map to be an homeomorphism.
	\begin{figure}[H]
		\begin{center}
			\input{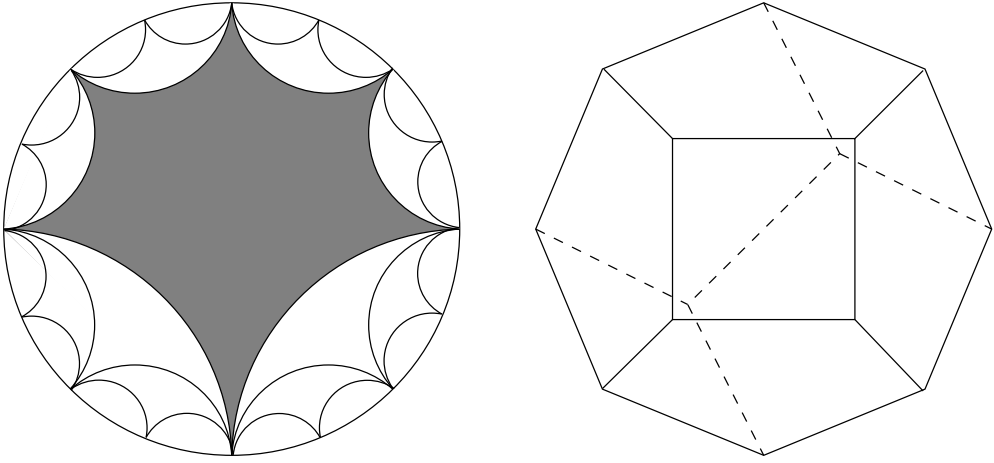}
		\caption{Associahedral 3-cell with its associated $F$-tessellation of rank $3$.}
		\end{center}
		\label{hexahedron}
	\end{figure}	
\end{itemize}

\subsection{The complex $C$ is aspherical and contractible}

\begin{lemma}
The complex $C$ is path-connected.
\end{lemma}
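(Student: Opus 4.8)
The plan is to reduce path-connectedness of $C$ to connectedness of its $1$-skeleton $C^1$, and then to show that every vertex of $C$ is joined to the base $F$-triangulation $A_F$ by a finite edge-path. For the reduction, recall that $C$ is a regular CW complex whose cells $f_A$ are products of associahedra, hence balls; thus any point in the interior of a cell can be joined by a path inside the closed cell to a point of its boundary, which lies in a strictly lower skeleton. Iterating this, every point of $C$ is connected by a path to some vertex, so it suffices to prove that the graph $C^1$ is connected, i.e. that any two $F$-triangulations are linked by a finite sequence of edges.

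Next I would fix an arbitrary $F$-triangulation $A$ and connect it to $A_F$. The natural object to consider is the intersection $A \cap A_F$. Since $A$ and $A_F$ have finite symmetric difference, $A \cap A_F$ is obtained from the $F$-triangulation $A$ by deleting the finitely many arcs of $A$ that do not lie in $A_F$, so it is an $F$-tessellation; consequently $D - (A \cap A_F)$ has only finitely many non-triangular components, say the convex dyadic polygons $P_1, \ldots, P_m$. Inside each $P_i$ the arcs of $A$ and the arcs of $A_F$ induce two triangulations $\alpha_i$ and $\beta_i$ of $P_i$, while outside $\bigcup_i P_i$ the triangulations $A$ and $A_F$ coincide.

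The main step is the \emph{classical} fact that any two triangulations of a fixed convex polygon can be transformed into one another by a finite sequence of diagonal flips, each flip replacing one diagonal of the quadrilateral formed by two adjacent triangles with the other diagonal. Applying this separately in each $P_i$ and performing the flips one polygon at a time yields a finite chain $A = B_0, B_1, \ldots, B_r = A_F$ in which $B_{j+1}$ is obtained from $B_j$ by a single flip inside one of the $P_i$. The diagonal introduced by such a flip joins two vertices of $P_i$, which are dyadic, so it is again a dyadic arc; moreover each $B_j$ agrees with $A_F$ outside $\bigcup_i P_i$, so its symmetric difference with $A_F$ stays finite and $B_j$ is again an $F$-triangulation, i.e. a genuine vertex of $C$.

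Finally I would verify that consecutive vertices $B_j, B_{j+1}$ are joined by an edge of $C^1$. Because they differ by a single flip in one quadrilateral, their intersection $B_j \cap B_{j+1}$ is a triangulation everywhere except in that quadrilateral, where exactly one diagonal is missing; hence $B_j \cap B_{j+1}$ is an $F$-tessellation of rank $1$, which is precisely the condition characterising an edge of $C^1$. Thus $A$ and $A_F$ are connected in $C^1$, and since $A$ was arbitrary, $C^1$ is connected and $C$ is path-connected. The only point requiring care is the matching between the elementary moves supplied by the flip theorem and the edges of $C$, together with the check that all intermediate configurations remain $F$-triangulations; both follow directly from the combinatorial description of $C^1$ and the finiteness of the symmetric difference, so the argument rests essentially on the classical flip-connectivity of polygon triangulations rather than on any new difficulty.
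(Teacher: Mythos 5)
Your proof is correct, but it follows a genuinely different route from the paper's. The paper's argument is much shorter and works in all dimensions at once: given $x \in f_A$ and $y \in f_B$ with $A,B \in \mathcal{I}$, it forms the intersection $A \cap B$, which is again an $F$-tessellation satisfying $A < A\cap B$ and $B < A\cap B$; hence both points are identified in $C$ with points of the single cell $f_{A\cap B}$, which is a finite product of associahedra and therefore path-connected. No edge paths and no flip theorem are needed, and this ``any two cells lie in a common larger cell'' principle is precisely the argument reused in the following lemma to show that all $\pi_n(C)$ vanish. Your proof instead reduces to the $1$-skeleton and invokes the classical flip-connectivity of triangulations of a convex polygon, applied inside each non-triangular component of $D - (A\cap A_F)$. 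This costs more work --- the reduction to $C^1$, the verification that the intermediate triangulations are $F$-triangulations and that consecutive ones span an edge of $C^1$ (all of which you carry out correctly) --- and it imports an external classical theorem, but it buys strictly more information: it produces explicit edge paths, so it proves that the graph $C^1$ is connected as a graph, with combinatorial distance controlled by flip distances in the polygons $P_i$. Note finally that your opening move (intersecting $A$ with $A_F$) is a special case of the paper's key step; intersecting the two arbitrary tessellations with each other, instead of routing everything through $A_F$ and the $1$-skeleton, collapses your argument to the paper's two-line proof.
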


\begin{proof}
Let $x$ and $y$ be two points of $C$. Then, there exist $A,B \in \mathcal{I}$
such that $x \in f_A$ and $y \in f_B$. Consider $D=A \cap B \in \mathcal{I}$. Note
that $A < D$ and $B < D$, thus $f_{DA}(x)$ and $f_{DB}(y)$
are points of $f_{D}$, which is a cell of $C$ of dimension rank$(D)$, thus
$f_{D}$ is path-connected. 
\end{proof}

Whitehead's theorem \cite{whitehead,whitehead2} states that every continuous 
map between connected CW-complexes which induces isomorphisms on all homotopy 
groups is a homotopy equivalence. We have already seen that $C$ is a CW-complex. 
In particular, we can prove that $C$ is contractible by showing that all homotopy 
groups $\pi_n(C)$ ($n \geq 1$) are trivial. 

\begin{lemma}
The homotopy groups $\pi_n(C)$ of $C$ are trivial for all $n \geq 1$.
In particular, $C$ is aspherical ($\pi_n$ are trivial for $n \geq 2$).
\end{lemma}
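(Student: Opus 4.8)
The plan is to exhibit $C$ as an increasing union of contractible subcomplexes and then invoke compactness. For each $k \geq 2$ let $P_k$ denote the convex polygon spanned by the dyadic points $\gamma(j/2^k)$, $0 \leq j \leq 2^k - 1$; its sides are exactly the arcs $(j/2^k,(j+1)/2^k)$, all of which belong to $A_F$, and the complement $D - P_k$ is triangulated by the arcs of $A_F$ of level $> k$. I would first record that deleting from $A_F$ every arc lying in the interior of $P_k$ produces an $F$-tessellation $A_k$ of rank $2^k - 3$ whose only non-triangular component of $D - A_k$ is $P_k$ itself, so that $f_{A_k} \equiv \mathcal{A}(P_{2^k})$.

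Next I would identify the closed cell $\overline{f_{A_k}}$ with the subcomplex $C_k$ consisting of all $f_A$ for which $A$ coincides with $A_F$ outside $P_k$. Indeed, the faces of $f_{A_k}$ are indexed by the $F$-tessellations $A < A_k$, that is, by the refinements $A \supseteq A_k$; since $A_k$ already carries the whole of $A_F$ outside $P_k$, such a refinement only adds arcs inside $P_k$, and conversely every tessellation agreeing with $A_F$ outside $P_k$ arises in this way. Under this correspondence the cell structure of $C_k$ matches, arc for arc, that of the finite associahedron $\mathcal{A}(P_{2^k})$; by the induction of Section 2 the latter is homeomorphic to the ball $B^{2^k - 3}$, so $C_k$ is contractible. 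The inclusion $P_k \subset P_{k+1}$ gives $C_k \subseteq C_{k+1}$, since agreeing with $A_F$ outside $P_k$ is stronger than agreeing outside $P_{k+1}$; thus the $C_k$ form a nested sequence of contractible subcomplexes, each containing the base vertex $f_{A_F}$.

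Then I would check the exhaustion $\bigcup_{k} C_k = C$. Any $A \in \mathcal{I}$ differs from $A_F$ in finitely many dyadic arcs; all of their endpoints lie at some bounded dyadic level and hence are vertices of $P_k$ for $k$ large, and the arcs themselves lie inside $P_k$. For such $k$ the tessellation $A$ coincides with $A_F$ outside $P_k$, so $f_A \in C_k$. Granting this, the conclusion is standard. A based map $S^n \to C$ with $n \geq 1$ has compact image, and a compact subset of a CW complex meets only finitely many open cells, so the image lies in a finite subcomplex and hence, by the exhaustion, in some $C_k$. As $C_k$ is contractible the map is null-homotopic there, a fortiori in $C$; therefore $\pi_n(C) = 0$ for all $n \geq 1$, and in particular $C$ is aspherical.

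I expect the only genuinely delicate point to be the cell-by-cell identification $C_k \cong \mathcal{A}(P_{2^k})$: one must verify that the isomorphism $f_A \equiv \prod_i \mathcal{A}(P_{n_i})$ and the gluing maps $\iota_{BA}$ restrict, for tessellations supported in $P_k$, exactly to the maps $f_\alpha$ and $i_{\beta\alpha}$ defining $\mathcal{A}(P_{2^k})$. This is precisely the compatibility already built into the construction of $C$, where the inclusions are read off from the inclusions of tessellations, so it should amount to unwinding definitions; everything else is the soft interplay of compactness with a directed union of balls.
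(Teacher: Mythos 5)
Your proof is correct, and it reaches the conclusion by a genuinely different mechanism than the paper. The paper argues locally and ad hoc: given $g\colon S^n\to C$, compactness yields finitely many cells $f_{A_1},\dots,f_{A_k}$ whose union contains the image, and then the single tessellation $A=A_1\cap\cdots\cap A_k$ is again an $F$-tessellation, so the one closed cell $f_A$ (a ball, being a product of associahedra) already contains $f_{A_1}\cup\cdots\cup f_{A_k}$; no exhaustion of $C$ is ever constructed. Your argument globalizes the same underlying directedness of the poset $\mathcal{I}$: the chain $A_2\supseteq A_3\supseteq\cdots$ is cofinal (every $F$-tessellation contains some $A_k$, by your bounded-level argument), so $C=\bigcup_k C_k$ with $C_k=f_{A_k}\cong\mathcal{A}(P_{2^k})\cong B^{2^k-3}$, and compactness finishes the job. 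What your route buys is a structural statement the paper never makes explicit: $C$ is an increasing union (direct limit) of finite associahedra, which both justifies the name ``infinite associahedron'' and reproves contractibility of $C$ directly, not just asphericity. What the paper's route buys is brevity: it needs no verification that the $C_k$ are nested and exhaust $C$, only that an intersection of finitely many $F$-tessellations is one.

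One point you should state with care: ``$A$ coincides with $A_F$ outside $P_k$'' must mean ``outside the \emph{open interior} of $P_k$,'' equivalently $A\supseteq A_k$, so that the sides of $P_k$ are required to belong to $A$. Otherwise $C_k$ would contain cells that are not faces of $f_{A_k}$ --- for instance, for $k=2$ the rank-one tessellation $A_F-\{(0,\tfrac14)\}$ agrees with $A_F$ strictly outside the square $P_2$ but lacks a side of $P_2$, hence does not refine $A_2$ --- and the identification $C_k=f_{A_k}$ would fail. With that reading, the faces of $f_{A_k}$ are exactly the cells $f_A$ with $A\supseteq A_k$, and your nesting and exhaustion arguments go through exactly as written.
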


\begin{proof}
Let $g: S^n \longrightarrow C$ a continuous map ($n \geq 1$). 
The image $g(S^n)$ intersects finitely many open cells of $C$, 
$\dot{f}_{A_1}, \ldots, \dot{f}_{A_k}$, because $S^n$ is compact and 
$C$ is a CW-complex. By open cell we mean the image of an open ball by
its attaching map. Suppose that $g(S^n)$ is not a single point of $C$. 
If there exists $g(x) \in g(S^n)$ such that 
$g(x) \not \in \dot{f}_{A_1} \cup \ldots \cup \dot{f}_{A_k}$, 
then $g(x)$ is a vertex of $C$. Furthermore, every
open neighbourhood $U$ of $x$ contains a point $y \in U$ such that 
$g(y) \neq g(x)$, $g(y) \in \dot{f}_{A_1} \cup \ldots \cup \dot{f}_{A_k}$
(the 0-skeleton of $C$ is totally disconnected).
Hence, $g(x)$ belongs to the closure of 
$\dot{f}_{A_1} \cup \ldots \cup \dot{f}_{A_k}$, which is included into
$f_{A_1} \cup \ldots \cup f_{A_k}$, proving that 
$g(S^n) \subset f_{A_1} \cup \ldots \cup f_{A_k}$.
  
Let $A_1,\ldots, A_k$ be the elements of $I$ corresponding to the cells
$f_{A_1}, \ldots, f_{A_k}$.
Define $A= A_1 \cap \ldots \cap A_k$. By definition $f_A$ is a cell of $C$ 
containing $f_{A_1} \cup \ldots \cup f_{A_k}$. Hence, $f_A$ is contractible
and contains $g(S^n)$. Thus, $g(S^n)$ is homotopy-equivalent to a point. 

The argument holds for all maps 
$g: S^n \longrightarrow C$ ($n \geq 1$). 
Thus $\pi_n(C)$ is trivial for all $n \geq 1$.
\end{proof}

Note that the main argument of the proof can be stated as follows: for every
couple of cells of $C$, there exist a third cell which contains both of them.
The following result is a direct consequence of Withehead's theorem and the
previous lemma.

\begin{corollary}
The infinite associahedron is contractible.
\end{corollary}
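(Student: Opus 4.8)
The plan is to apply Whitehead's theorem to a single map and let the two preceding lemmas do all the work. I would fix a vertex $x_0$ of $C$ --- for concreteness the $F$-triangulation $A_F$, which has rank $0$ so that its associated cell $f_{A_F}$ is a point --- and consider the inclusion $\iota \colon \{x_0\} \hookrightarrow C$. The goal is to show that $\iota$ is a homotopy equivalence, since that is exactly the assertion that $C$ is homotopy equivalent to a point, i.e. that $C$ is contractible.

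First I would check that the hypotheses of Whitehead's theorem are met. Both spaces are CW-complexes (the one-point space trivially, and $C$ as already observed in the excerpt), and both are connected: the point trivially, and $C$ by the first lemma. It then remains to verify that $\iota$ induces isomorphisms on all homotopy groups. On $\pi_0$ the induced map is a bijection because $C$ is path-connected, so that both $\pi_0(\{x_0\})$ and $\pi_0(C)$ are singletons. For $n \geq 1$ the second lemma gives $\pi_n(C, x_0) = 0$, while $\pi_n(\{x_0\}, x_0) = 0$ trivially, so the induced homomorphism $0 \to 0$ is an isomorphism. Hence $\iota$ induces isomorphisms on $\pi_n$ for every $n \geq 0$, and Whitehead's theorem yields that $\iota$ is a homotopy equivalence. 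Consequently $C$ is homotopy equivalent to a point and is therefore contractible.

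Since the substantive content has already been isolated in the two lemmas, there is no genuine obstacle here; the only points demanding (minor) care are bookkeeping ones. One must ensure that the connectedness hypothesis appearing in the stated form of Whitehead's theorem is actually supplied --- this is precisely the role of the first lemma --- and one should treat the $\pi_0$ / basepoint case explicitly rather than only the groups $\pi_n$ with $n \geq 1$, since the cited formulation refers to \emph{all} homotopy groups. With these routine checks in place the corollary follows at once.
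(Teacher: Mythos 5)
Your proposal is correct and follows essentially the same route as the paper: the paper also deduces the corollary directly from Whitehead's theorem together with the path-connectedness lemma and the vanishing of the homotopy groups $\pi_n(C)$ for $n \geq 1$. The only difference is that you make explicit the map to which Whitehead's theorem is applied (the inclusion of a vertex) and the $\pi_0$ bookkeeping, which the paper leaves implicit.
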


\section{The automorphism group $C$ and isometries of $C^1$}

\subsection{Non-oriented Thompson's group $T$}

{\textit{The non-oriented Thompson's group $T$}} is the group of piecewise 
linear homeomorphisms of the circle $S^1$, thought as the unit interval $[0,1]$ with
identified endpoints, which:
\begin{enumerate}
\item map the set of dyadic rational numbers to itself, 
\item are differentiable except at finitely many points, all of them being
 dyadic rational numbers, and 
\item on intervals of differentiability, the derivatives are powers of 2. 
\end{enumerate}
We denote the non-oriented Thompson's group $T$ by $T^{no}$.
Note that $T^{no} \simeq T \rtimes \mathbb{Z}/2\mathbb{Z}$ because the following
short sequence 
$$\begin{array}{rcl}
	1 \longrightarrow  T \longrightarrow  T^{no} &
	\longrightarrow & \mathbb{Z}/2\mathbb{Z}  \longrightarrow 1 \\
 	t  & \mapsto & \left\{ \begin{array}{ll}
 		0, & \text{if } t \text{ preserves the orientation of } [0,1]\\
 		1, & \text{if } t \text{ reverses the orientation of } [0,1].
		\end{array}\right. 
	\end{array}
$$
is exact and admit the section 
$\mathbb{Z}/2\mathbb{Z} \simeq \left<-\text{id}\right>$.

The elements of Thompson's group $T$ can be seen as pairs of standard dyadic
partitions of the unit interval \cite{CFP}.
One can easily adapt this result to
non-oriented $T$. This characterization will be used to define the action of
both groups on the set of $F$-tessellations.

A subinterval $[x_1,x_2]$ of the unit interval $[0,1]$ is called a
{\textit{standard dyadic interval}} if it is of the form
$\displaystyle \left[\frac{m}{2^n},\frac{m+1}{2^n}\right]$ 
for some positive integers $m$ and $n$ satisfying $0 \leq m \leq 2^n-1$.\\
A partition of the unit interval given by 
$x_0=0 < x_1 < x_2 < \ldots < x_{k-1} < x_k=1$ 
is a {\textit{standard dyadic partition}} if, for all $i \in \{1,\ldots,k-1\}$, 
the subinterval $[x_i,x_{i+1}]$ is a standard dyadic interval.

\begin{lemma} (analogue to \cite{CFP}, Lemma 2.2)
Let $t$ be an element of $T^{no}$. Then, there exists a standard dyadic
partition of the unit interval $0=x_0 < x_1 < \ldots < x_k=1$ such that:
\begin{enumerate} 
\item $t$ is affine on every subinterval of the partition, and 
\item the induced partition on the $y$ axis, which is either 
$$0 = t(x_i) < t(x_{i+1}) < \ldots < t(x_k) = t(x_0) < \ldots < t(x_{i-1})=1$$
or
$$1 = t(x_i) > t(x_{i+1}) > \ldots > t(x_k) = t(x_0) > \ldots > t(x_{i-1})=0$$
is also a standard dyadic partition of the unit interval.
\end{enumerate} \label{partitions}
\end{lemma}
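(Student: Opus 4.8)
The plan is to split along the exact sequence $1 \to T \to T^{no} \to \mathbb{Z}/2\mathbb{Z} \to 1$ introduced above: I would treat the orientation-preserving elements directly and reduce the orientation-reversing ones to them via the reflection $-\mathrm{id}$. In both cases the substance is a single subdivision estimate, and the only genuine issue is to produce \emph{one} standard dyadic partition of the domain whose image is again a standard dyadic partition, rather than merely a partition into dyadic intervals.

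Suppose first that $t \in T$ preserves orientation. Since $t$ and $t^{-1}$ preserve the dyadic rationals, the finitely many breakpoints of $t$ are dyadic, the point $p$ with $t(p)=0$ is dyadic, and on each of the finitely many affine pieces the slope is $2^{e}$ for some $e \in \mathbb{Z}$. I would choose an integer $N$ large enough that every breakpoint, together with $0$ and $p$, is a multiple of $2^{-N}$, that $N \ge e$ for every piece slope $2^{e}$, and that the dyadic value $t(b)$ at the left endpoint $b$ of each piece lies in $2^{e-N}\mathbb{Z}$; such an $N$ exists since there are finitely many pieces. Then the uniform partition $x_j = j/2^N$, $0 \le j \le 2^N =: k$, works: each $[x_j,x_{j+1}]$ is a standard dyadic interval lying inside a single affine piece, so $t$ is affine on it, while its image $[t(x_j),t(x_{j+1})]$ has length $2^{e-N}$ and left endpoint $t(b)+2^{e}(x_j-b)$, which lies in $2^{e-N}\mathbb{Z}$ because $t(b)$ does and $2^{e}(x_j-b)\in 2^{e}\cdot 2^{-N}\mathbb{Z}=2^{e-N}\mathbb{Z}$; hence every image is again a standard dyadic interval. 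As $t$ is an orientation-preserving self-homeomorphism of the circle with $t(x_i)=0$ for the index $i$ with $x_i=p$, the images of the $x_j$, read in cyclic order starting from $x_i$, sweep once around the circle, and listing them increasingly gives
$$0 = t(x_i) < t(x_{i+1}) < \dots < t(x_k) = t(x_0) < \dots < t(x_{i-1}) = 1,$$
where $t(x_k)=t(x_0)$ because $x_0$ and $x_k$ are the two parameters of $\gamma(0)$. This is a standard dyadic partition by the divisibility check above, which is the first conclusion.

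For orientation-reversing $t$ I would write $t = s\circ(-\mathrm{id})$ with $s = t\circ(-\mathrm{id})\in T$, using that $-\mathrm{id}$ is an involution. The reflection $-\mathrm{id}\colon x\mapsto 1-x$ carries the standard dyadic interval $[m/2^n,(m+1)/2^n]$ to $[(2^n-m-1)/2^n,(2^n-m)/2^n]$, hence sends standard dyadic partitions to standard dyadic partitions while reversing their order. Pulling back through $-\mathrm{id}$ the domain partition $\{z_j\}$ adapted to $s$ yields the standard dyadic partition $\{1-z_j\}$, on which $t$ is affine piecewise (a composition of affine maps) and whose image coincides with the image partition of $s$; since $-\mathrm{id}$ reverses orientation, this image chain is now read decreasingly and yields the second conclusion with the reversed inequalities. (Equivalently, the uniform-subdivision estimate of the previous paragraph applies verbatim to the negative slopes $-2^{e}$.)

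The routine bookkeeping aside, the one point to watch — the ``main obstacle'' — is precisely the divisibility check guaranteeing that a sufficiently fine \emph{uniform} dyadic subdivision of the domain is mapped \emph{onto} standard dyadic intervals, and that a single $N$ can be made to serve all affine pieces simultaneously; this is what upgrades ``partition into dyadic intervals'' to ``standard dyadic partition'' on both the $x$- and the $y$-axis. The circle wrap-around is a mere notational nuisance affecting only the indexing of the two displayed chains, not the substance of the argument.
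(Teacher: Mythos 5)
Your proof is correct and is essentially the paper's argument: both rest on the same divisibility check showing that a sufficiently fine uniform dyadic refinement of the breakpoint partition is mapped to standard dyadic intervals. The only differences are cosmetic --- the paper refines each affine piece separately (with a possibly different mesh $2^{-n}$ per piece) and treats slopes $\pm 2^{-r}$ of both signs at once rather than reducing orientation-reversing elements to $T$ via $-\mathrm{id}$, a reduction you yourself note is equivalent to applying the estimate to negative slopes.
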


\begin{proof}
  Consider the $x$ axis partition associated to $t$, $0=z_0 < z_1 < \ldots < z_k = 1$. 
  As $t \in T^{no}$, $z_0, \ldots, z_k$ are dyadic rational
  numbers and $t$ is affine on each interval of the partition. Let $[z_i,z_{i+1}]$ be 
  an interval of this partition and suppose that
  $t'(x) = \pm 2^{-r}$, if $x \in [z_i,z_{i+1}]$. Let $n$ be an integer such that $2^n 
  z_i$, $2^n z_{i+1}$, $2^{n+r} t(z_i)$ and
  $2^{n+r} t(z_{i+1})$ are integers. Then,
  $$\displaystyle z_i < z_i + \frac{1}{2^n} < z_i + \frac{2}{2^n} < z_i + \frac{3}
  {2^n} < \ldots < z_{i+1}$$
  is a standard dyadic partition of the interval $[z_i,z_{i+1}]$, and its image
  
  $$
  \left\{\begin{array}{ll}
  \displaystyle t(z_i) < t(z_i) + \frac{1}{2^{n+r}} < t(z_i) + \frac{2}{2^{n+r}} < 
  t(z_i) + \frac{3}{2^{n+r}} < \ldots < t(z_{i+1}) &
  \text{if } t'(x) > 0,\\
  & \\
  \displaystyle t(z_i) > t(z_i) + \frac{1}{2^{n+r}} > t(z_i) + \frac{2}{2^{n+r}} > 
  t(z_i) + \frac{3}{2^{n+r}} > \ldots > t(z_{i+1}) &
  \text{if } t'(x) < 0,
  \end{array}\right.$$
  is a standard dyadic partition of the interval $[t(z_i), t(z_{i+1})]$ as well.
  One can repeat the previous procedure  
  for every interval of the $x$-axis partition associated to $t$.
\end{proof}

Since any standard dyadic interval can be split into two standard
dyadic intervals by taking the midpoint, one can not expect the partitions of 
Lemma \ref{partitions} to be unique. However, there exists a
standard dyadic partition satisfying Proposition \ref{partitions}
with a minimum number of standard dyadic intervals, which is
called the \textit{minimal standard dyadic partition}.
It can be proved that the minimal standard dyadic
partition exists and every partition fulfilling Lemma
\ref{partitions} is a sub-partition of this minimal standard dyadic
partition \cite{CFP}. 

\subsection{Action of $T^{no}$ on the set of $\mathcal{I}$}

Let $A$ be an $F$-tessellation and $t$ an element of $T^{no}$. Recall 
that $t$ induces a bijection into the set of dyadic numbers of the interval.
Let $a$ be a dyadic arc of $A$, with dyadic endpoints $d_1,d_2$. Then
the $F$ tessellation $t \cdot A$ contains the dyadic arc with endpoints
$t(d_1), t(d_2)$. 

Note that the dyadic arcs of $A_F$ correspond to standard dyadic intervals
of length less than (or equal to) $1/2$, where
$[0,1/2]$ and $[1/2,1]$ have been identified. In particular,
standard dyadic partitions of the unit interval with at least three
pieces are in one to one correspondence with inscribed polygons of $A_F$ 
containing (eventually on the boundary) the center $(0,0)$ of $D$.

\begin{lemma}
The action of $T^{no}$ on the set $\mathcal{I}$ of $F$-tessellations 
given before is well defined.
\end{lemma}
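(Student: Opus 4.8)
The action is specified arc by arc and involves no choices, so the content of ``well defined'' is precisely that $t \cdot A$ again belongs to $\mathcal{I}$ for every $t \in T^{no}$ and every $A \in \mathcal{I}$; equivalently, that $t \cdot A$ satisfies the three defining conditions of an $F$-tessellation. The plan is to verify these conditions in turn and to isolate the finiteness condition as the only substantial step. First I would record that, by the defining properties of $T^{no}$, the map $t$ restricts to a bijection of $\mathbb{Z}[1/2] \cap [0,1]$ (compatibly with $0 \sim 1$) and hence induces a bijection of the set $\mathcal{A}$ of dyadic arcs via $(x,y) \mapsto (t(x),t(y))$, injectivity of $t$ ensuring $t(x) \neq t(y)$. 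This bijection commutes with $\cup$, $\cap$ and $\setminus$, so it is enough to treat $F$-triangulations: if $A = A' \setminus \{a_1,\dots,a_k\}$ with $A'$ an $F$-triangulation, then $t \cdot A = (t \cdot A') \setminus \{t(a_1),\dots,t(a_k)\}$ is obtained from $t \cdot A'$ by deleting $k$ distinct arcs, hence is an $F$-tessellation of rank $k$ as soon as $t \cdot A'$ is shown to be an $F$-triangulation.

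Next I would dispose of conditions (1) and (2) combinatorially. Two dyadic chords cross in $D$ exactly when their endpoint pairs link in the circular order on $\partial D$; as a homeomorphism of $S^1$, $t$ either preserves or reverses this order, and in both cases preserves the linking relation. Thus $t$ and $t^{-1}$ carry pairwise non-crossing families to pairwise non-crossing families and preserve incidences of arcs at shared endpoints, so they map the complementary triangles of $A'$ to triangles (an arc landing inside an image triangle would be the image of an arc inside a face of $A'$); hence $t \cdot A'$ is again a minimal triangulation. The orientation-reversing case causes no trouble, and indeed $-\mathrm{id}$ fixes $A_F$ setwise, so one may if desired treat $T$ first and the generator of $\mathbb{Z}/2\mathbb{Z}$ separately.

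The heart of the argument, and the step I expect to be the main obstacle, is condition (3): that $t \cdot A'$ differs from $A_F$ in only finitely many arcs. Writing $\triangle$ for symmetric difference and using that $t$ is a bijection of $\mathcal{A}$, one has $(t \cdot A') \triangle (t \cdot A_F) = t \cdot (A' \triangle A_F)$, which is finite because $A' \triangle A_F$ is; since $(t \cdot A') \triangle A_F \subseteq \big((t \cdot A') \triangle (t \cdot A_F)\big) \cup \big((t \cdot A_F) \triangle A_F\big)$, it suffices to show $(t \cdot A_F) \triangle A_F$ is finite. For this I would invoke Lemma \ref{partitions}: choose a standard dyadic partition $0 = x_0 < x_1 < \cdots < x_k = 1$ adapted to $t$, on each interval of which $t$ is affine with slope of absolute value $2^{s_i}$ and which $t$ sends onto a standard dyadic interval. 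The arcs of $A_F$ are exactly the standard dyadic intervals of length $\le 1/2$ (the diameter counted once). If an arc $a = [m/2^n,(m+1)/2^n]$ has no breakpoint $x_i$ in its interior and is short enough that $n - s_i \ge 1$, then $t$ maps it affinely inside a single partition interval and a direct computation shows $t(a)$ is again a standard dyadic interval, of length $2^{-(n-s_i)} \le 1/2$, so $t(a) \in A_F$. Only finitely many arcs of $A_F$ violate one of these conditions: each dyadic breakpoint $x_i = p/2^q$ lies in the interior of only the finitely many $A_F$-arcs of level below $q$, and only finitely many $A_F$-arcs exceed the required length. Hence $t(a) \in A_F$ for all but finitely many $a \in A_F$, giving finiteness of $(t \cdot A_F) \setminus A_F$; applying the same reasoning to $t^{-1} \in T^{no}$ gives finiteness of $A_F \setminus (t \cdot A_F)$, and condition (3) follows.

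The one point that must be pinned down carefully is the claim used above that an affine map with slope a power of $2$, sending a standard dyadic interval onto a standard dyadic interval, carries standard dyadic subintervals to standard dyadic subintervals. This is exactly the local dyadic-structure preservation packaged into Lemma \ref{partitions}, and it is here that the hypotheses ``slopes are powers of $2$'' and ``breakpoints are dyadic'' are genuinely used; everything else reduces to bookkeeping with symmetric differences and the circular order on $\partial D$.
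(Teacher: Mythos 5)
Your proof is correct, and its engine is the same as the paper's: Lemma \ref{partitions}, together with the fact that an affine map with power-of-2 slope carrying a standard dyadic interval onto a standard dyadic interval carries standard dyadic subintervals to standard dyadic subintervals. (You are right to flag that last point: it is not literally the \emph{statement} of Lemma \ref{partitions}, but it is exactly the computation in its proof, and the paper's own argument uses it just as implicitly.) The organization, however, differs. The paper exploits the stated correspondence between standard dyadic partitions and polygons inscribed in $A_F$ containing the center: it encloses all non-triangular pieces of $A \cap A_F$ in such a polygon $P$, takes a common refinement $\bar{z}$ of the associated partition with the breakpoint partition of $t$, and concludes in one stroke that $t \cdot A$ agrees with $A_F$ outside the image polygon, with only finitely many arcs inside. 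Your route instead reduces to the single tessellation $A_F$ by symmetric-difference bookkeeping and then counts the exceptional arcs of $A_F$ directly (those containing a breakpoint in their interior, or too long to have short image), invoking $t^{-1}$ for the reverse inclusion, whereas the paper's polygon argument captures both directions at once. Your version is more self-contained (no appeal to the polygon/partition dictionary) and also verifies conditions (1) and (2), i.e.\ that $t \cdot A$ is again a minimal triangulation, via preservation of the circular order and of the linking relation, a verification the paper omits entirely. What the paper's version buys is brevity and a geometric picture; what yours buys is completeness and an explicit accounting of where each hypothesis on $T^{no}$ is used.
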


\begin{proof}
Let $A$ be an $F$-tessellation and let $t$ be an element of $T^{no}$.
Let $P$ be the smallest polygon inscribed in $F$ containing all non-triangular
polygons of the $F$-tessellation $A \cap A_F$, and the center of $D$.
Let $\bar{p}$ be the standard dyadic partition associated to $P$, and let 
$\bar{x}$ be the minimal standard dyadic partition of $t$. Let $\bar{z}$ be a 
common sub-partition of $\bar{x}$ and $\bar{p}$. By Lemma \ref{partitions},
$t$ is affine in every interval of $\bar{z}$, which means that the
tessellation $t \cdot A$ coincides exactly with $A_F$ outside the image of
the polygon $P$. Inside $P$ there are finitely many dyadic arcs, thus 
$t \cdot A$ contains finitely many dyadic arcs different from those on $A_F$
and $t \cdot A \in \mathcal{I}$.
Remark that the rank of $t \cdot A$ coincides with the rank of $A$.
\end{proof}

Recall that the infinite associahedron has been defined by associating to
each $F$-tessellation a closed cell. It is thus natural to ask if
the action of $T^{no}$ on $\mathcal{I}$ induces an action on $C$.

A combinatorial automorphism of $C$ (automorphism for short) is a bijection 
between the set of closed cells $\{f_A \, : \, A \in \mathcal{I}\}$ of $C$ 
to itself preserving dimensions, inclusions and boundaries.   
Let $f_1, f_2$ be two different cells of $C$ of the same dimension.  
Note that, by construction, the boundaries $\partial f_1, \partial f_2$
are different. Thus, for all $k \in \mathbb{N}$,
$$\text{Aut}(C^k) \subseteq \text{Aut}(C^{k-1}).$$

\begin{proposition}
Non-oriented $T^{no}$ acts faithfully on $C$ by automorphisms. The action is
given by $t\cdot f_A = f_{t \cdot A}$.
Furthermore, the automorphism group of $C$ is isomorphic to $T^{no}$. 
\end{proposition}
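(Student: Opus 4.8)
The plan is to realize the statement in three parts: first that $\rho\colon t\mapsto(f_A\mapsto f_{t\cdot A})$ is a homomorphism into $\mathrm{Aut}(C)$, then that it is injective, and finally that it is onto. For the first part, the preceding lemma already gives $t\cdot A\in\mathcal I$ with $\mathrm{rank}(t\cdot A)=\mathrm{rank}(A)$, so $\rho(t)$ preserves dimensions. Because $t$ acts on dyadic arcs by a bijection that respects set inclusion, one has $A<B$ if and only if $t\cdot A<t\cdot B$; hence $\rho(t)$ preserves the partial order, carries each inclusion $\iota_{BA}$ to $\iota_{\,t\cdot B\,,\,t\cdot A}$, and therefore preserves boundaries, so $\rho(t)$ is a combinatorial automorphism. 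The relations $(ts)\cdot A=t\cdot(s\cdot A)$ and $\mathrm{id}\cdot A=A$ are inherited from the action of $T^{no}$ on dyadic points, so $\rho$ is a homomorphism.

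For injectivity, suppose $\rho(t)=\mathrm{id}$, so that $t$ fixes every $F$-tessellation and in particular every $F$-triangulation. It is enough to prove that $t$ fixes every dyadic point of $[0,1]$, since $t$ is then the identity by density and continuity. If some dyadic $d$ satisfied $t(d)\ne d$, I would choose disjoint boundary neighbourhoods $U\ni\gamma(d)$ and $V\ni\gamma(t(d))$ (note that $t(d)$ is again dyadic) and build an $F$-triangulation $A$ obtained from $A_F$ by a single flip supported inside $U$, so that the symmetric difference $A\triangle A_F$ lies in $U$. Then $A$ agrees with $A_F$ throughout $V$, whereas $t\cdot A$ differs from $A_F$ precisely on the image of the flip, which lies in $V$; hence $t\cdot A\ne A$, contradicting $\rho(t)=\mathrm{id}$. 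The only point needing care is that a flip changes a single quadrilateral and so can be localized inside $U$.

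For surjectivity I would first pass to the $1$-skeleton. By the inclusions $\mathrm{Aut}(C^k)\subseteq\mathrm{Aut}(C^{k-1})$ noted above, an automorphism $\Phi$ of $C$ is determined by its action on vertices and edges, and the square and pentagon $2$-cells are recognizable inside the flip graph $C^1$ — a square $2$-cell arises from two independent flips carried out in two distinct square regions, while a pentagon arises from the cycle of five flips confined to a single pentagonal region. So it suffices to understand $\Phi$ on $C^1$ together with this recognizable $2$-cell data. Using that $T^{no}$ acts transitively on $F$-triangulations — any such triangulation differs from $A_F$ in finitely many arcs, a discrepancy that is realized by a suitable element of $T^{no}$ — I compose $\Phi$ with some $\rho(t_0)$ so that, after this normalization, $\Phi$ fixes the base vertex $A_F$.

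With $\Phi(A_F)=A_F$, the edges and $2$-cells incident to $A_F$ encode the combinatorics of the Farey triangulation, so $\Phi$ induces a combinatorial automorphism of $A_F$ preserving the incidence pattern of its arcs, hence (up to orientation reversal, which accounts for the $\mathbb Z/2\mathbb Z$ factor) an automorphism of the cyclic order of the dyadic points. The main obstacle is the final step: to show that any such automorphism which extends consistently over the whole infinite complex is induced by an element of $\mathrm{Stab}_{T^{no}}(A_F)$, that is, to exclude \emph{exotic} symmetries of the infinite Farey tessellation that are not piecewise-dyadic-linear homeomorphisms of the circle. I expect this to follow by propagating the constraint through infinitely many flips and invoking the asymptotic rigidity built into the definition of $T^{no}$, forcing $\Phi$ to agree with some $\rho(t_1)$ on the star of $A_F$. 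Once this holds, connectivity of the flip graph lets me propagate the equality $\Phi=\rho(t_1)$ from $A_F$ along edges to all of $C$, so that the original automorphism equals $\rho(t_0^{-1}t_1)$; together with injectivity this identifies $\mathrm{Aut}(C)$ with $T^{no}$.
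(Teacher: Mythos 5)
Your first two steps are essentially correct, and in fact more self-contained than the paper's own treatment: the paper disposes of well-definedness by noting that the action preserves rank and the partial order, and then simply \emph{cites} \cite{MCGandT} for faithfulness, whereas your localized-flip argument proves faithfulness directly. Two small points to tighten there: when you assert that $t\cdot A$ differs from $A_F$ exactly on the image of the flip, you are implicitly using $t\cdot A_F=A_F$ (available, since $\rho(t)=\mathrm{id}$ fixes the vertex $f_{A_F}$), and you should say that $U$ is shrunk so that $t(U)\subseteq V$ and that $A_F$ contains quadrilaterals (pairs of adjacent triangles) with all four vertices arbitrarily close to $\gamma(d)$ --- both true, but they are what makes the contradiction go through.

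The surjectivity step, however, contains a genuine gap, and it sits exactly where you flag it. The statement you defer --- that an automorphism $\Phi$ fixing $A_F$, which a priori is only a permutation of cells induced by no circle map whatsoever, must be induced by an element of $\mathrm{Stab}_{T^{no}}(A_F)$ --- is the entire mathematical content of the ``Furthermore'' clause, and your appeal to ``the asymptotic rigidity built into the definition of $T^{no}$'' is circular: the definition of $T^{no}$ constrains elements of $T^{no}$, not abstract automorphisms of $C$, and the difficulty is precisely to show that the permutation of the arcs of $A_F$ that $\Phi$ induces on the edges at $A_F$, together with consistency over all deeper cells, forces it to arise from a piecewise-dyadic-affine homeomorphism of the circle. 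The paper does not prove this either; instead it uses the earlier observation that distinct cells of equal dimension have distinct boundaries, so restriction gives inclusions $\mathrm{Aut}(C^k)\subseteq\mathrm{Aut}(C^{k-1})$, hence $\mathrm{Aut}(C)\subseteq\mathrm{Aut}(C^2)$, and then quotes the previously established theorem $\mathrm{Aut}(C^2)\simeq T^{no}$ of \cite{MCGandT}; since the $T^{no}$-action on $C^2$ extends over all of $C$, this sandwiches $\mathrm{Aut}(C)\simeq T^{no}$. (A secondary weak point: your transitivity claim for the $T^{no}$-action on $F$-triangulations is true, but the parenthetical justification you give is not a proof; it needs either the tree-pair/Ptolemy-type correspondence or connectivity of the flip graph plus the fact that each flip of $A_F$ is realized by a group element.) So to complete your route you must actually carry out the rigidity argument --- which takes several pages in \cite{MCGandT} even for the $2$-skeleton --- or else do what the paper does and reduce to that reference via $\mathrm{Aut}(C)\subseteq\mathrm{Aut}(C^2)$.
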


Note that non-oriented $T^{no}$ is isomorphic to the group of automorphisms of
Thompson's group $T$ by Brin's theorem \cite{brinTh}.

\begin{proof}
The action of $T^{no}$ on the set of $F$-tessellations preserves the rank and 
the partial order. Thus, the action is well-defined. Furthermore, it coincides
with the action of $T^{no}$ given in \cite{MCGandT}, which is faithful.
Finally, we know that $\text{Aut}(C)$ is a subgroup of $\text{Aut}(C^2)$, and 
$\text{Aut}(C^2) \simeq T^{no}$ (see \cite{MCGandT}).
Hence, the two groups are isomorphic.
\end{proof}

\begin{corollary}
For all $2 \leq n \in \mathbb{N}$, 
the automorphism group of the
$n$-skeleton of $C$ is isomorphic to non-oriented $T^{no}$.
\end{corollary}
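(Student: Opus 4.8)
The plan is to deduce the statement for every $n \geq 2$ from the two facts already at our disposal: the Proposition, which identifies $\text{Aut}(C)$ with $T^{no}$ via the action $t \cdot f_A = f_{t \cdot A}$, and the identity $\text{Aut}(C^2) \simeq T^{no}$ recorded in its proof (from \cite{MCGandT}). The whole argument is a sandwich between these two groups, so no new geometric analysis of the cells is required; it is a purely formal squeeze once the two identifications are matched up.

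First I would set up the relevant restriction homomorphisms. Restricting an automorphism of $C$ to the cells of dimension at most $n$ yields an automorphism of $C^n$, giving a homomorphism $\text{Aut}(C) \to \text{Aut}(C^n)$. It is injective because, as observed just before the Proposition, two distinct cells of the same dimension have distinct boundaries; hence by induction on dimension an automorphism of $C$ is completely determined by its restriction to any skeleton $C^n$ with $n \geq 1$. Iterating the inclusions $\text{Aut}(C^k) \subseteq \text{Aut}(C^{k-1})$ already established in the text, I also obtain an injective restriction map $\text{Aut}(C^n) \hookrightarrow \text{Aut}(C^2)$ for every $n \geq 2$. Thus for each such $n$ I have a chain of injections $\text{Aut}(C) \hookrightarrow \text{Aut}(C^n) \hookrightarrow \text{Aut}(C^2)$, all of them restriction maps, whose composite is the direct restriction $\text{Aut}(C) \hookrightarrow \text{Aut}(C^2)$.

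Next I would identify the two outer groups together with this composite. By the Proposition every element of $\text{Aut}(C)$ has the form $f_A \mapsto f_{t \cdot A}$ for a unique $t \in T^{no}$, and by \cite{MCGandT} every element of $\text{Aut}(C^2)$ arises in the same way from the action restricted to $C^2$. Consequently the composite $\text{Aut}(C) \hookrightarrow \text{Aut}(C^2)$ is exactly the map sending the automorphism induced by $t$ on $C$ to the automorphism induced by $t$ on $C^2$; under the two identifications with $T^{no}$ it is the identity, hence an isomorphism. In particular it is surjective: every automorphism of $C^2$ is the restriction of an automorphism of $C$.

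Finally, the squeeze closes the argument. Since the composite $\text{Aut}(C) \hookrightarrow \text{Aut}(C^n) \hookrightarrow \text{Aut}(C^2)$ is onto, the second inclusion $\text{Aut}(C^n) \hookrightarrow \text{Aut}(C^2)$ is onto as well; being also injective it is an isomorphism, so $\text{Aut}(C^n) \simeq \text{Aut}(C^2) \simeq T^{no}$ for all $n \geq 2$. The only genuine subtlety, and the step I would check most carefully, is the compatibility of the two identifications with $T^{no}$ — that the isomorphism $\text{Aut}(C^2) \simeq T^{no}$ from \cite{MCGandT} is realized by the very same action $t \cdot f_A = f_{t \cdot A}$ used in the Proposition, so that the composite restriction map is honestly the identity and not merely some abstract isomorphism between two copies of $T^{no}$. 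Once this compatibility is in place, the corollary follows with no further computation.
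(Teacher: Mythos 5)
Your proof is correct and is essentially the paper's own argument: the paper proves the Proposition (and hence this corollary, which it leaves proofless) by exactly this squeeze, using the faithful action of $T^{no}$ on $C$, the chain of injective restriction maps $\text{Aut}(C) \subseteq \text{Aut}(C^n) \subseteq \text{Aut}(C^2)$ coming from the fact that cells are determined by their boundaries, and the identification $\text{Aut}(C^2) \simeq T^{no}$ from the cited work via the same action $t \cdot f_A = f_{t\cdot A}$. The compatibility point you flag as the main thing to check is precisely what the paper asserts when it says the action ``coincides with the action of $T^{no}$ given in \cite{MCGandT}''.
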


\subsection{Isometries of $C^1$}

The 1-skeleton of $C$ can be easily realised as a metric space by identifying
each edge with the unit euclidean segment. Then, the group of isometries of
the metric realisation of $C^1$ coincides with the automorphism group of the 
graph $C^1$.  

\begin{proposition}
The group of isometries of the metric realisation of $C^1$ is isomorphic to $T^{no}$.
\end{proposition}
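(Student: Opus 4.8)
The plan is to reduce the statement to the isomorphism $\text{Aut}(C^2)\simeq T^{no}$ already obtained. As noted just above, the isometry group of the metric realisation of $C^1$ is canonically identified with the automorphism group of the graph $C^1$, so it suffices to prove $\text{Aut}(C^1)\simeq T^{no}$. From the inclusions $\text{Aut}(C^k)\subseteq\text{Aut}(C^{k-1})$ together with the corollary we already have $T^{no}\simeq\text{Aut}(C^2)\subseteq\text{Aut}(C^1)$; concretely, the faithful action of $T^{no}$ on $C$ restricts to a faithful action on $C^1$ by graph automorphisms, since an element of $T^{no}$ preserves the rank and the partial order on $\mathcal{I}$ and hence sends rank-$1$ intersections to rank-$1$ intersections, i.e. edges to edges. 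Thus the only thing left is the reverse inclusion $\text{Aut}(C^1)\subseteq\text{Aut}(C^2)$, namely that every automorphism of the graph $C^1$ preserves the $2$-cells of $C$.

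First I would note that, because $C$ is a regular CW complex in which two distinct cells of the same dimension have distinct boundaries, an automorphism $\phi$ of $C^1$ extends (uniquely) to an automorphism of $C^2$ as soon as it carries the boundary cycle of every $2$-cell onto the boundary cycle of a $2$-cell. By the explicit description of the rank-$2$ tessellations, the boundaries of the square $2$-cells are exactly certain $4$-cycles of $C^1$, and the boundaries of the pentagonal $2$-cells are certain $5$-cycles; so the problem becomes the combinatorial one of recognising these two families of cycles inside the abstract graph $C^1$, in a way that is manifestly preserved by any graph automorphism.

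The heart of the matter, and the step I expect to be the main obstacle, is this recognition. I would first prove that every $4$-cycle of $C^1$ bounds a unique square $2$-cell: if $A_1A_2A_3A_4A_1$ is a closed path of length $4$, each step being a diagonal flip, then the four flips can close up only if they resolve two independent square components of $D-(A_1\cap A_2\cap A_3\cap A_4)$ --- if instead the successive flips shared a triangle they would compose into a pentagonal (length-$5$) relation and the path could not return after four steps. Hence $A_1\cap A_2\cap A_3\cap A_4$ is an $F$-tessellation of rank $2$ with two square non-triangular components, i.e. a square $2$-cell, so $\phi$ maps square $2$-cells to square $2$-cells. I would then characterise the pentagonal boundaries among the remaining cycles: since every $4$-cycle is now known to be filled by a square, the boundary of a pentagonal $2$-cell is precisely a chordless $5$-cycle no two of whose vertices are opposite in a common square $2$-cell (this reflects the fact that the five flips take place in a single pentagonal region, where consecutive flips never commute), a property visibly invariant under $\phi$. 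The delicate point is to verify that these local conditions single out exactly the facial cycles, with no stray $4$- or $5$-cycles and no pentagonal boundary that accidentally decomposes into squares; this is carried out by a finite case analysis of how two, and then three, successive flips can compose, using the explicit lists of rank-$2$ and rank-$3$ tessellations given earlier. Once the recognition is established, $\phi$ preserves all $2$-cell boundaries, hence extends to an automorphism of $C^2$, giving $\text{Aut}(C^1)\subseteq\text{Aut}(C^2)$ and therefore $\text{Aut}(C^1)\simeq T^{no}$.
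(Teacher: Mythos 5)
Your proposal follows essentially the same route as the paper: both arguments reduce to showing that any graph automorphism of $C^1$ carries boundaries of $2$-cells to boundaries of $2$-cells (the paper recognizes them as the unique minimal closed paths through pairs of consecutive edges, you by classifying $4$-cycles and $5$-cycles), hence extends to an automorphism of $C^2$, and then both invoke the already-established isomorphism $\text{Aut}(C^2)\simeq T^{no}$. Your recognition claims are in fact correct --- every $4$-cycle of $C^1$ bounds a square $2$-cell and every $5$-cycle bounds a pentagonal one, so your extra condition on $5$-cycles is harmless though unnecessary --- and your level of detail is comparable to the paper's, which likewise asserts the key uniqueness-of-minimal-cycle claim without carrying out the case analysis.
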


\begin{proof}
Let $e_1, e_2$ be two consecutive 
edges of $C^1$. By definition, there exist $A,B$ two different $F$-tessellations 
of rank 1 such that $e_1=f_A$ and $e_2=f_B$. Furthermore, $A \cap B$ is a
$F$-tessellation of rank 2, $e_1,e_2 \in \partial f_{A \cap B} \subset C^1$,
and $\partial f_{A \cap B}$ is the unique minimal closed path of $C^1$ containing 
$e_1,e_2$. The length of this path is either 4 or 5, and it depends only on the
number of non-triangular polygons of $D - A \cap B$. 

Let $\varphi$ be an automorphism of $C^1$, and let $C$ and $D$ be $F$-tessellations 
of rank 1 such that $\varphi(e_1) = f_C$ and $\varphi(e_2)=f_D$. Then, 
$\varphi(\partial f_{A \cap B})$ is the unique minimal closed path containing
$\varphi(e_1),\varphi(e_2)$. Thus, it coincides with $\partial f_{C \cap D}$,
forcing $\partial f_{C \cap D}$ and $\partial f_{A \cap B}$ to have the same length.
Hence, $\varphi$ can be extended to a unique automorphism of $C^2$.
\end{proof}

If $g$ is an isometry of a metric space $X$, then its translation length is
$$|g| = \inf \{d(x,g(x)) \, : \, x \in X\}.$$
We say that $g$ is \textit{semi-simple} when the infimum in the definition of $|g|$
is realised as a minimum. Note that $C^1$ can be seen as a cubical complex 
of dimension 1 (by identifying each edge with the unit interval). 
Bridson \cite{bridsonIsom} proved that all isometries of a polyhedral complex 
where the number of isometry types of polygons is finite are semi-simple. In 
particular, this result can be applied to the metric realisation of $C^1$.

\begin{corollary}
All isometries of $C^1$ are semi-simple.
\end{corollary}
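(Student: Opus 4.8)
The plan is to deduce this corollary directly from Bridson's theorem on semi-simplicity of isometries \cite{bridsonIsom}, recalled immediately before the statement. The real content is checking that the metric realisation of $C^1$ satisfies the hypotheses of that theorem. First I would fix the metric structure already described: each edge $f_A$, with $A \in \mathcal{I}$ of rank $1$, is identified with the unit euclidean segment, so that $C^1$ becomes a one-dimensional polyhedral (indeed cubical) complex. Since every cell is a unit segment, there is a single isometry type of $1$-cell together with a single type of $0$-cell, so the number of isometry types of polygons is finite, in fact equal to one. This is precisely the hypothesis under which Bridson's theorem guarantees that every isometry of such a complex is semi-simple.

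Next I would verify that every isometry of the metric realisation of $C^1$ is cellular, so that Bridson's conclusion applies to the full isometry group. An isometry must preserve the local metric structure: the vertices of $C^1$ are exactly the branch points of the graph, whereas every interior point of an edge has a neighbourhood isometric to an open interval. Hence any isometry carries vertices to vertices and edges to edges, inducing an automorphism of the graph $C^1$; by the preceding proposition this automorphism group is isomorphic to $T^{no}$. In particular every isometry is simplicial, and Bridson's theorem applies to each of them individually.

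The one point deserving care, and the only genuine obstacle, is that $C^1$ is \emph{not} locally finite: each $F$-triangulation can be altered by flipping the diagonal of any one of its infinitely many square components, so every vertex is incident to infinitely many edges. Many semi-simplicity arguments rely on local finiteness or on properness of the complex, both of which fail here. The strength of Bridson's theorem is exactly that it dispenses with local finiteness and requires only finitely many isometry types of cells, a condition we have verified above. I would therefore stress this hypothesis, conclude that every isometry $g$ of $C^1$ attains its translation length $|g| = \inf\{d(x,g(x)) : x \in C^1\}$ as a minimum, and state that all isometries of $C^1$ are semi-simple.
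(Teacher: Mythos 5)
Your proposal is correct and follows exactly the paper's route: the paper likewise notes that the metric realisation of $C^1$ is a one-dimensional cubical complex with a single isometry type of cell and then invokes Bridson's theorem to conclude semi-simplicity. Your additional remarks (that isometries send vertices to vertices, and that Bridson's hypothesis of finitely many shapes replaces any local finiteness assumption, which indeed fails here) are sound elaborations of the same argument rather than a different approach.
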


\bibliographystyle{plain}
\bibliography{bib}

\end{document}